\newtheorem*{theorem*}{Theorem}
\def\carre{\hfill $\Box$}
\def\a{          \alpha}
\def \R{{\mathbb R}}
\def \Z{{\mathbb Z}}
\def \N{{\mathbb N}}
\newcommand{\T}{{\mathbb T}}
\newcommand{\prf}{{\begin{proof}}}
\newcommand{\epf}{{\end{proof}}}
\newcommand{\Q}{{\mathbb Q}}
\newtheorem{theo}{\sc Theorem}
\newtheorem{prop}{\sc Proposition}
\theoremstyle{definition}
\def\bee{\begin{equation}}
\def\eee{\end{equation}}
\newtheorem{defi}{\sc Definition}
\theoremstyle{remark}
\newcommand{\pdvr}[2]
{\dfrac{\partial^{#2} #1}{\partial \theta^{#2_1} \partial r^{#2_2}}}
\newcommand{\pdvrs}[2]
{\partial^{#2} #1 /\partial \theta^{#2_1} \partial r^{#2_2}}
\theoremstyle{definition}
\newtheorem{rem}{Remark}
\numberwithin{equation}{section}
\definecolor{orange}{rgb}{1,0.5,0}
\title[Continuous spectrum for mixing Schr\"odinger operators]{Continuous spectrum for a class of smooth mixing Schr\"odinger operators}
\author[Bassam Fayad and Yanhui Qu]{Bassam Fayad$^1$ and Yanhui Qu$^2$}
\thanks{ $^1$  Supported by  ANR-15-CE40-0001 and by the project BRNUH
}
\thanks{$^2$ Supported by National Natural Science Foundation of China,  No. 11371055 and No. 11431007.}
\address[Y.H. Qu]{Department of  Mathematical Science, Tsinghua University, Beijing 100084, P. R. China}
\email{yhqu@math.tsinghua.edu.cn}
\address[B. Fayad]{CNRS, IMJ-PRG, France}
\email{bassam.fayad@gmail.com}
\begin{document}

\begin{abstract} We give the first example of a smooth volume preserving mixing dynamical system such that the discrete Schr\"odin\-ger operators on the line defined with a potential generated by this system and a H\"older sampling function, have almost surely a continuous spectrum. 
\end{abstract}

\maketitle

\section{Introduction}

Given a dynamical system $(\Omega,T,\mu)$, a sample function  $V : \Omega  \to \R$ and a base point $x\in \Omega,$ we define the 1d Schr\"odinger operator generated by $(\Omega,T,\mu)$,  $V$ and $x$  as the operator on $\ell^2(\Z)$
\begin{equation} \label{operator} \tag{$*$} (H_{T,V,x}u)_n= u_{n+1}+u_{n-1} +V(T^n x) u_n. \end{equation}

A general fact in spectral theory of 1d Schr\"odinger operators is that randomness of the potential is a source of localization of the spectrum. Thus an ergodic dynamical system with randomness features has in general a localized pure point spectrum.

 The goal of this paper is to prove the following result that we state informally here before we give its exact statement at the end of this seciton. 

\medskip 

\noindent {\bf Theorem. } {\it There exist smooth volume preserving and mixing dynamical systems such that the associated 1d Schr\"odinger operators with H\"older potentials have, for almost every base point, no pure point in their spectrum. }

\medskip

Let us first recall some of the instances where randomness yields pure point spectra.  The most famous example is Anderson's model: the dynamical system is the Bernoulli shift $(\R^\Z,\sigma,\mu^\Z)$, where   $\mu$ is a probability measure  supported on $\R$; the sample function $V: \R^\Z\to\R$ is defined by $V(x)=x_0$, where $x=\cdots x_{-1}x_0x_1\cdots.$ It is well-known that if $\mu=r(x)dx$ with $r$ bounded and compactly supported,
 then for $\mu^\Z$ a.e. $x\in \R^\Z$, 
the operator $H_{\sigma,V,x}$ has pure point spectrum and the related  eigenfunctions are localized, see for example \cite{KS,CFKS,CL,PF}.  Another kind of example is given by Bourgain and Schlag \cite{BS}.  In their example, the dynamical system is $(\T^2,A,{\rm Leb})$, where $A:\T^2\to \T^2$ is a hyperbolic toral automorphism; the sample function $V(x)=\lambda F(x)$ with $F\in C^1(\T^2)$ non-constant and $\int F=0$. For  $\delta>0$ small and  the coupling constant $\lambda>0$ sufficiently small, they established Anderson localization on some subinterval $I_0$ of 
$[-2+\delta,-\delta]\cup[\delta,2-\delta]$.
We note that $(A,\T^2,{\rm Leb})$ is mixing since $A$ is hyperbolic. 

An interesting question is to understand how much randomness is needed to insure localization. 
For example,  recently much interest in
Schr\"odinger operators generated by the skew-shift was in part motivated by the study of the transition from complete localization to continuous spectra as randomness of the potential decreases.
 
  It is a known fact indeed that quasi-periodic potentials often display absolutely continuous spectra for small coupling. The skew-shift dynamics are on one hand related to the quasi-periodic dynamics, but present on the other hand some mixing (in the fibers) and parabolic features (see \cite{katok}).
The skew-shift $T_{\alpha}$ is defined on $\T^2$ by $T_\alpha(x,y)=(x+\alpha, y+x)$, where $\alpha\in \R$ is irrational. When $\a$ is Diophantine and the sample function is regular, it is expected that localization holds almost surely for arbitrary nonzero coupling constant (see \cite{BGS,B} for results in that direction).



We note that the system $( \T^2,T_\alpha,{\rm Leb})$ is strictly ergodic, but not weakly mixing. Thus, the above expectation aims at establishing localization under weak randomness hypothesis. 

Our approach in the present work is a dual one, since we will construct a smooth mixing system for which the 
associated Schr\"odinger operators with any H\"older potential have no eigenvalues for almost all base points. 
 
 Our construction is a smooth reparametrization of a linear flow on $\T^3$ (see Section \ref{sec.rep} for the definitions), that will combine mixing 
with the existence of super-recurrence times (see Definition \ref{superrecurrent}) for almost every point.  The strong recurrence implies a Gordon property on the potential, that in turn yields absence of pure point in the spectrum (see Section \ref{sec.gordon} below).

The construction of the reparametrized flow follows the construction in \cite{singular} of a reparametrization of a linear flow on $\T^3$ that is mixing but has a purely singular maximal spectral type. Indeed, the singularity of the maximal spectral type in \cite{singular} was due to the existence of very strong periodic approximations on  parts of the phase space that have a slowly decaying measure.

Note that for continuous potentials, the behavior of the Schr\"odinger operator above the skew shifts is completely different from the one described in the above conjecture. Indeed, Boshernitzan and Damanik showed that for a typical skew-shift and a  generic continuous sampling function, the associated Schr\"odinger
operators have no eigenvalues for almost all base points \cite{BD1}. In \cite{BD2}, they generalized this result to skew shifts on the torus $\T^3$ with a frequency in a residual set. They also showed that their approach cannot be carried out for skew shifts in dimension larger or equal to $4$. 

The approach of \cite{BD1,BD2} is to introduce a recurrence property called "Repetition Property",  
 that, when  checked for a given dynamical system (at some or all orbits depending on the system)  implies a Gordon property on the generic continuous potential  (above some or all orbits depending on the system).
   Our approach is inspired by theirs, but in our case the Gordon property follows directly from super-recurrence, for all H\"older potentials. In their case, especially since they include almost every frequency of the skew shift, recurrence is not very strong and the "Repetition Property" they prove is only sufficient to guarantee continuous spectrum for a generic continuous potential.
Note also that in \cite{BD1}, the "Repetion Property" and its spectral consequences are shown to hold  for almost every point, for almost every interval exchange.  It is well-known that almost every interval exchange transformation is weakly mixing \cite{AF}, but it is never mixing \cite{K}.

\medskip

We will now give the precise statement of our construction. We start with the definition of super-recurrence.

Let $(\Omega,T)$ be a topological dynamical system with $\Omega$
a compact metric space and $T$ a homeomorphism.

\begin{defi} \label{superrecurrent} 
Assume $x\in \Omega$. If there exist $\alpha>1$ and an  integer
sequence $k_n\uparrow \infty$ such that
$$
d(T^{k_n}x,x)\le \exp(-k_n^{\alpha}),
$$
then we say that $x$ is super-recurrent with recurrent exponent
$\alpha$.

If $\mu$ is an invariant ergodic measure of $T$, we say that the system $(\Omega,T,\mu)$  is super-recurrent if $\mu$-a.e. $x \in \Omega$ is super-recurrent. 
\end{defi}




 As mentioned above,  our examples will be  reparametrization of a minimal translation flow on the three torus by a smooth function $\Phi$.  As will be recalled in Section \ref{three}, such flows are uniquely ergodic for a measure equivalent to the Haar measure with
density ${1 \over \phi}$. We denote by $\mu$ the Haar measure on the torus and by  $\mu_\phi$ the measure with density ${1 \over \phi}$. Note that reparametrizations of linear flows always have zero topological entropy.

\begin{theo} 
\label{application} There exists $(\a,\a') \in \R^2$ and a smooth  reparametrization $\phi \in C^\infty(\T^3,{\R_+^*})$ of the translation flow $T_{t(\a,\a',1)}$ such that the resulting flow is mixing, for its unique ergodic invariant probability measure $\mu_\phi$,  and $\mu$ a.e. $x\in \T^3$ is super-recurrent for its time one map $T$. 

As a consequence, 
for every H\"older continuous potential $V:\T^3 \to \R$, the operator $H_{T,V,x}$ has purely continuous spectrum for $\mu$ a.e. $x\in \T^3$. 
\end{theo}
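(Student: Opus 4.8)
The plan is to establish the two assertions separately: first the dynamical construction of a smooth reparametrization that is simultaneously mixing and super-recurrent $\mu$-almost everywhere, and then the spectral conclusion, which will follow from a Gordon-type argument as soon as super-recurrence is in hand.

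For the construction I would adapt the scheme of \cite{singular}. One chooses the base frequency of the linear flow $T_{t(\a,\a',1)}$ to be extremely well approximable by rationals, and builds the speed function $\phi\in C^\infty(\T^3,\R_+^*)$ so that its slow region is sharply concentrated near the resonant sub-tori associated with the best rational approximations. The arithmetic defect of the frequency produces, for the linear flow, very strong periodic approximations on pieces of phase space whose measure decays slowly; transported through the reparametrization these give, for $\mu$-a.e. $x$, a sequence of times $k_n\uparrow\infty$ at which the time-one map $T=T_1^\phi$ satisfies $d(T^{k_n}x,x)\le\exp(-k_n^{\alpha})$ for some $\alpha>1$, that is, super-recurrence in the sense of Definition \ref{superrecurrent}. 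At the same time, the sharp concentration of $\phi$ near the resonances creates enough differential shearing of nearby orbits to force mixing of the reparametrized flow for its unique invariant measure $\mu_\phi$, in the same way that it forced the maximal spectral type to be singular in \cite{singular}. I expect this step to be the main obstacle: super-recurrence is driven precisely by the strong rational approximations that, for the unreparametrized linear flow, are the classical obstruction to mixing. The delicate point is therefore to tune $\phi$ and the frequency with a single choice of parameters so that the reparametrization restores mixing while leaving the super-recurrent return structure of the orbits intact, and to verify super-recurrence on a set of full $\mu$-measure rather than along one orbit.

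Granting the construction, I would derive the spectral statement from a Gordon property of the potential. Fix a H\"older potential $V$ with exponent $\gamma\in(0,1]$ and constant $L$, and set $\Lambda=\max\{\mathrm{Lip}(T),\mathrm{Lip}(T^{-1})\}<\infty$, which is finite because $T$ is a smooth diffeomorphism of $\T^3$. Let $x$ be super-recurrent with return times $k_n$ and exponent $\alpha>1$, and put $q_n=k_n$. Since $T^{m+q_n}x=T^m(T^{q_n}x)$, for every $m$ with $|m|\le 2q_n$ the crude Lipschitz bound $d(T^m y,T^m z)\le\Lambda^{|m|}d(y,z)$ gives
\[
|V(T^{m+q_n}x)-V(T^{m}x)|\le L\,d\bigl(T^{m}(T^{q_n}x),T^{m}x\bigr)^{\gamma}\le L\,\Lambda^{2\gamma q_n}\exp(-\gamma q_n^{\alpha}).
\]
Because $\alpha>1$, the term $-\gamma q_n^{\alpha}$ dominates every linear contribution, so for each fixed $C>0$ one has
\[
\lim_{n\to\infty}C^{\,q_n}\max_{1\le|m|\le 2q_n}|V(T^{m+q_n}x)-V(T^{m}x)|=0 .
\]
Thus the sequence $\bigl(V(T^n x)\bigr)_{n\in\Z}$ is a Gordon potential for every super-recurrent $x$; note that here zero entropy is not even needed, the rough exponential Lipschitz estimate already being beaten by the super-exponential recurrence.

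It remains to invoke Gordon's theorem as set up in Section \ref{sec.gordon}: a bounded potential satisfying the super-exponential approximate-periodicity condition above generates a Schr\"odinger operator with no $\ell^2$ eigenfunctions, since, comparing the transfer matrices over consecutive blocks of length $q_n$ (which are forced to be nearly equal) and using $\det=1$ together with the Cayley--Hamilton identity, a solution of $H_{T,V,x}u=Eu$ cannot decay simultaneously at $+\infty$ and $-\infty$. Hence $H_{T,V,x}$ has empty point spectrum, i.e. purely continuous spectrum, for every super-recurrent $x$. Finally, since $\mu$ and $\mu_\phi$ are equivalent, their densities being bounded away from $0$ and $\infty$, the almost-sure statements coincide, and the super-recurrence of $\mu$-a.e. $x$ established in the first part yields purely continuous spectrum for $\mu$-a.e., equivalently $\mu_\phi$-a.e., $x\in\T^3$.
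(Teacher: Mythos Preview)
Your deduction of the spectral consequence is complete and is exactly the paper's argument (Proposition~\ref{prop.gordon}): super-recurrence with exponent $\alpha>1$, combined with the H\"older bound on $V$ and the Lipschitz bound $\Lambda^{|m|}$ on iterates of the $C^1$ diffeomorphism $T$, forces the Gordon condition on $(V(T^n x))_{n\in\Z}$, and Gordon's lemma then excludes $\ell^2$ eigenfunctions. Your observation that $\mu$ and $\mu_\phi$ are equivalent, so that the a.e.\ statements coincide, is also correct.

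The gap is in the first part: you describe the strategy but do not carry out the construction, and the construction is where all the work lies. The paper passes to the special-flow model over $R_{\a,\a'}$ on $\T^2$, fixes $(\a,\a')$ with denominators satisfying $q'_n\ge e^{q_n^5}$ and $q_{n+1}\ge e^{(q'_n)^5}$, and then builds the ceiling function $\varphi=1+\sum_{n\ge n_0}\tilde X_n(x)+Y_n(y)$ with $Y_n(y)=e^{-(q'_n)^4}\cos(2\pi q'_n y)$ as in \cite{mixing} and $\tilde X_n$ a carefully tailored trigonometric polynomial. Two explicit criteria are stated and verified: a mixing criterion (uniform lower bounds on $|D_xS_m\varphi|$ or $|D_yS_m\varphi|$ on sets whose complements shrink) and a super-recurrence criterion ($|S_{q_nq'_n}\varphi-q_nq'_n|\le e^{-(q_nq'_n)^2}$ for all $x$ with $\{q_nx\}\in[1/n^2,1/n-1/n^2]$). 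The tension you correctly flag is resolved by the design of $\tilde X_n$: it is essentially zero on $\{q_nx\}<1/n$ (a set of measure $\sim 1/n$, large enough that $\sum 1/n=\infty$ triggers Borel--Cantelli and yields a.e.\ super-recurrence) while having $|\tilde X_n'|\ge q_ne^{-q_n^4}$ elsewhere (so the mixing criterion still holds on a set whose complement has measure $O(1/n)\to 0$). None of this is automatic from \cite{singular}; in particular the quantitative balance ``flat region of measure $1/n$'' is the new ingredient, and your sketch does not supply it.
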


\begin{rem}
{\rm  It is easy to see that super-recurrence for almost every point implies the MRP property of \cite{BD1}. Hence for generic continuous function $V$ and $\mu$-a.e. $x$, the operator $H_{T,V,x}$ has continuous spectrum.


}
\end{rem}

The frequencies $\a$ and $\a'$ are  specially chosen super Liouville numbers, so that $T_{t_n(\a,\a',1)}$ is very close to identity for some sequence $t_n \to \infty$ (see Section \ref{R}). Naturally, the very strong periodic approximations of the linear flow are lost after time change, otherwise mixing would not be possible. However, one can choose the reparametrization in such a way that along a sequence of times $t_n \to \infty$, the very strong almost periodic behavior  of the translation flow still appears on a set of small measure $\epsilon_n$. If now $\epsilon_n$ decreases, but not too rapidly, say $\epsilon_n \sim \frac{1}{n}$, then by a Borel-Cantelli argument, most of the points on the torus  will be strongly recurrent along a subsequence of the sequence $t_n$.

\section{Super-recurrence and continuous spectrum}
\label{sec.gordon}

To show why super-recurrence implies the absence of a point part in the spectrum, we just have to show that it implies the Gordon condition, that we now state. 

\noindent {\bf The Gordon condition.} A bounded function $V:\Z\to\R$ is called a Gordon potential if there
are positive integers   $k_n\to \infty$ such that
$$
  \max_{1\le l\le k_n}|V(l)-V(l\pm k_n)|\le n^{-k_n}
$$
for any $n\ge1.$  

The Gordon condition insures that the  1d Schr\"odinger operator on $\ell^2(\Z)$ with potential $V$ has no eigenvalues \cite{gordon}.

Assume $V:\Omega\to\R$ is
continuous. Define
$$
V_x(n)=V(T^nx),\ \ x\in\Omega, n\in\Z.
$$

We now assume that $M$ is a smooth compact manifold and $T$ is a $C^1$ diffeomorphism of $M$. Let $d$ be the Riemann metric on $M.$  Then we have the following simple consequence of super-recurrence. 

\begin{prop} \label{prop.gordon}
 If $x\in M$ is super-recurrent and $V:M\to\R$ is H\"older, then $V_x$ is a Gordon
potential.
\end{prop}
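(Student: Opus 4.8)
The plan is to estimate each difference $|V_x(l)-V_x(l\pm k_n)|$ directly, combining the H\"older continuity of $V$ with a uniform control of how far the iterates $T^l x$ and $T^{l\pm k_n}x$ can drift apart when $1\le l\le k_n$. The only nontrivial input is the single super-recurrence estimate $d(T^{k_n}x,x)\le\exp(-k_n^{\alpha})$; everything else is transporting it along the orbit.

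First I would record the Lipschitz data of $T$. Since $M$ is compact and $T$ is $C^1$, set $L:=\sup_{p\in M}\max(\|D_pT\|,\|D_pT^{-1}\|)$, which is finite, and we may assume $L\ge 1$. Integrating along a minimizing geodesic between two points shows that both $T$ and $T^{-1}$ are $L$-Lipschitz for the Riemann metric $d$, hence $T^j$ is $L^{|j|}$-Lipschitz for every $j\in\Z$. Let $C>0$ and $\gamma\in(0,1]$ be H\"older constants for $V$. Using the group property, for $1\le l\le k_n$ I write $T^{l+k_n}x=T^{l}(T^{k_n}x)$ and $T^{l}x=T^{l-k_n}(T^{k_n}x)$, so that in both cases
\[
d(T^l x,\,T^{l\pm k_n}x)\ \le\ L^{k_n}\,d(T^{k_n}x,x)\ \le\ L^{k_n}\exp(-k_n^{\alpha}),
\]
uniformly in $l$ (here I use $L\ge 1$ and $0\le k_n-l<k_n$). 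Applying H\"older continuity gives the uniform bound
\[
\max_{1\le l\le k_n}|V_x(l)-V_x(l\pm k_n)|\ \le\ C\,L^{\gamma k_n}\exp(-\gamma k_n^{\alpha}).
\]

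The step I expect to be the crux is comparing this with the Gordon threshold $n^{-k_n}$, and this is precisely where the hypothesis $\alpha>1$ is indispensable: iterating $T$ may expand distances, so the Lipschitz factor $L^{\gamma k_n}$ grows exponentially in $k_n$, and a merely exponential recurrence ($\alpha=1$) would not suffice. Rewriting the bound as $C\exp\!\big(k_n(\gamma\log L)-\gamma k_n^{\alpha}\big)$, the super-exponential term $k_n^{\alpha}$ dominates every term linear in $k_n$; concretely, the right-hand side is $\le n^{-k_n}=\exp(-k_n\log n)$ as soon as
\[
\gamma\,k_n^{\alpha-1}\ \ge\ \gamma\log L+\log n+\frac{\log C}{k_n}.
\]
Since $\alpha-1>0$ and the super-recurrence times tend to infinity, I would extract from the super-recurrence sequence a subsequence, relabelled $(k_n)_{n\ge1}$, along which this inequality holds with the matching index $n$; this relabelled sequence is increasing to infinity and realizes the Gordon condition, so $V_x$ is a Gordon potential.
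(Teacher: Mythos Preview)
Your proof is correct and follows essentially the same approach as the paper: transport the single super-recurrence estimate along the orbit via the Lipschitz bound for iterates of $T$, apply H\"older continuity, and then use $\alpha>1$ to absorb the exponential expansion into the super-exponential recurrence. Your bookkeeping is in fact slightly tighter (you obtain $L^{k_n}$ where the paper uses $L^{2k_n}$), and the paper arranges the subsequence by simply enforcing $k_n\ge n$, but these are cosmetic differences.
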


\begin{proof} 
Since $T$
is  $C^1$, $T$ is Lipschitz. Let $L>1$ be the Lipschitz
constant. 
Assume $V$ is $\beta$-H\"older with H\"older constant
$C_1.$ Let $\alpha>1$ be the recurrent exponent of $x$. Let
$\{k_n:n\ge 1\}$ be the sequence related to $x$. By taking a subsequence, we can assume $k_n\ge n$.   For
$1\le l\le k_n$ we have 
\begin{eqnarray*}
|V_x(l)-V_x(l\pm k_n)|&=&|V(T^lx)-V(T^{l\pm k_n}x)|\\
&\le & C_1 d(T^lx,T^{l\pm k_n}x)^\beta\\
&\le & C_1 [L^l d(x,T^{\pm k_n}x)]^\beta\\
&\le & C_1 [L^{2k_n} e^{-k_n^\alpha}]^\beta\\
&=& C_1 \exp\left(-\beta(k_n^\alpha-2k_n\ln L)\right)\\
&\le& n^{-k_n}
\end{eqnarray*}
as soon as $n$ is big enough. By the definition, $V_x$ is a Gordon
potential. \end{proof}

Hence, the second part   of Theorem \ref{application} follows from the first part of Theorem \ref{application} and   Proposition \ref{prop.gordon} and  the above mentioned spectral consequence of the Gordon condition on the potentials.

We now proceed to the construction of the reparametrized flow.

\section{Super-recurrent mixing flows} \label{three}

We start with some notations and reminders on reparametrizations and special flows.

\subsection{Translation flows on the torus} The translation flow on ${\T}^n$ of vector ${\a} \in
{\R}^n$ is the flow arising from the constant vector field
$X({x}) = {\a}$. We denote this flow by $\lbrace
R_{t \a} \rbrace$. When  the numbers
$1,\a_1,\cdots$ $,\a_n$ are rationally independent, i.e. none of them  is a
rational combination of the others, $\lbrace R_{t \a} \rbrace$ is uniquely ergodic for the Haar measure $\mu$ on the torus. In this case we say
it is an irrational flow. 

\subsection{Reparametrized flows} \label{sec.rep} If
$\phi$ is a strictly positive smooth real function on ${\T}^n$, we define  the reparametrization of   $\lbrace R_{t \a} \rbrace$ with velocity $\phi$ as the flow given by the vector field $\phi(x) {\a}$,
that is, by the system 
$${ d {x}\over dt } = \phi ({x}) {\a}.$$
The new flow has the same orbits as   $\lbrace R_{ t \a} \rbrace$ and
preserves a measure equivalent to the Haar measure given by the
density ${1 \over \phi}$. Moreover, if  $\lbrace R_{t \a} \rbrace$ is
uniquely ergodic then so is the reparametrized flow (see \cite{parry}).

\subsection{ Special flows.} The reparametrizations of linear flows can be viewed as special flows above toral translations. We give the formal definition.

\begin{defi}\label{special-flow}
Given a Lebesgue space $L$, a measure preserving transformation $T$ on
$L$ and an integrable strictly positive real function $\varphi$ defined on $L$ we define the special
 flow over $T$ and under the {\sl ceiling function} $\varphi$
by inducing on ${{L \times \R} / \sim }$, where $\sim$ is the
identification $(x, s + \varphi(x)) \sim (T(x),s)$,the action of \begin{eqnarray*}L \times \R &  \rightarrow &  L \times \R \\ (x,s) & \rightarrow & (x,s+t). \end{eqnarray*} \end{defi} 

 If $T$ preserves a unique probability measure $\lambda$ then the special flow will preserve a unique probability measure that is the normalized product measure of $\lambda$ on the base and the Lebesgue measure on the fibers. 

We will be interested in special flows above minimal translations $R_{\a,\a'}$ of the two torus and under smooth functions $\varphi(x,y) \in C^{\infty}(\T^2,{\R_+^*})$ that we will denote by $T^t_{\a,\a',\varphi}$. We denote $M_\varphi =\{ (z,s) : z \in \T^2, s \in [0, \varphi(z) \}$. We will still denote by $\mu$ the product of the Haar measure of $\T^2$ with the normalized Lebesgue measure on the line. 

In all the sequel we will use the following notation, for $m \in \N$,
$$S_m \varphi (z) = \sum_{l=0}^{m-1} \varphi(z+l(\a,\a')).$$

With this notation, given $t \in \R_+$ we have for $\xi \in M_\varphi$, $\xi=(z,s)$
  \begin{equation} \label{equation.special} 
  T^t \xi = \left( R^{N(t,s,z)}_{\a,\a'}(z), t+s -
\varphi_{N(t,s,z)}(z) \right), 
\end{equation}
where  $N(t,s,z)$ is the largest integer $m$ such that $t+s-\varphi_m(x)
  \geq 0$,
 that is the number of fibers covered by  $(z,s)$ during its
  motion under the action of the flow until time $t$.

\subsection{Mixing} We also recall the definition of mixing for a measure
preserving flow: a flow $\lbrace
T_t \rbrace$ preserving a measure $\nu$ on $M$ is said to be mixing if, 
for any measurable subsets $A$ and $B$ of $M$, one has 
\begin{eqnarray*}
\lim_{t\rightarrow \infty} \nu ( T^tA \bigcap B )  = \nu(A) \nu (B). 
\end{eqnarray*}

By standard equivalence between special flows and reparametrizations (see for example \cite{mixing}), Theorem \ref{application} follows from 

\begin{theo} \label{special} There exists a vector $(\a,\a') \in \R^2$ and a smooth strictly positive function $\varphi$ defined over $\T^2$ 
such that  the special flow $T^t_{\a,\a',\varphi}$  is mixing and $\mu$-a.e. $\xi \in M_\varphi$ is super-recurrent for $T^1_{\a,\a',\varphi}$.
\end{theo}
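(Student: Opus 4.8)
The plan is to reduce the theorem to a single construction of the ceiling function, splitting the work into a \emph{mixing} part, taken over from \cite{singular}, and a \emph{super-recurrence} part, which is the genuinely new ingredient. First I would normalize the ceiling as $\varphi = 1 + g$ with $\int_{\T^2} g\,d\mu = 0$; the point of the integer mean is that a constant ceiling gives $S_m\varphi = m\in\Z$ automatically, so the only obstruction to a fiberwise return at an \emph{integer} time comes from the oscillating part $g$. Concretely, using \eqref{equation.special}, if at a base point $z$ one has $N(q_n,s,z)=q_n$, then
\[
T^{q_n}(z,s)=\bigl(R^{q_n}_{\a,\a'}(z),\, s - S_{q_n}g(z)\bigr),
\]
so the displacement $d\bigl(T^{q_n}(z,s),(z,s)\bigr)$ is controlled by $\|q_n(\a,\a')\|$ on the base and by $|S_{q_n}g(z)|$ on the fiber. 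One checks that $N(q_n,s,z)=q_n$ whenever $|S_{q_n}g(z)|$ is small and $s$ lies in the interior of the fiber (which holds for $\mu$-a.e.\ $\xi$), since $\varphi\approx 1$ forces $S_{q_n}\varphi(z)\le q_n+s<S_{q_n+1}\varphi(z)$. Thus super-recurrence with $k_n=q_n$ and some exponent $\al>1$ will follow once I guarantee simultaneously that $\|q_n(\a,\a')\|\le e^{-q_n^{\al}}$ and $|S_{q_n}g(z)|\le e^{-q_n^{\al}}$ on a suitable set of base points.

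Next I would fix the super-Liouville frequencies and the ceiling together, as in \cite{singular}. I choose $(\a,\a')$ with a super-separated sequence of simultaneous-approximation denominators $q_n\uparrow\infty$ satisfying $\|q_n(\a,\a')\|\le e^{-q_n^{\al}}$, which furnishes the base return for \emph{every} $z$. The function $g$ is then built scale by scale so that the Birkhoff sum $S_{q_n}g$ is \emph{super-flat} --- within $e^{-q_n^{\al}}$ of $0$ --- on a set $G_n\subset\T^2$ of measure $\mu(G_n)\sim\eps_n$, while having a large gradient, hence strong shear, off $G_n$. This is exactly the strong periodic approximation on a slowly decaying set of \cite{singular}: the super-Liouville resonance at the denominator $q_n$ collapses the relevant Fourier block of $g$ to a near-constant on $G_n$, and the parameters are tuned so that $\eps_n\sim 1/n$. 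Setting $A_n=\{(z,s)\in M_\varphi:\ z\in G_n\}$, the fiber being free, $\mu(A_n)\sim\eps_n\sim 1/n$, and on $A_n$ the displacement of $T^{q_n}$ is $\le e^{-q_n^{\al}}$ by the identity above.

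For mixing I would invoke and adapt the argument of \cite{singular}: away from the resonant set the variation of $S_m\varphi$ across the base produces a shear that grows without bound, stretching small boxes $U\times J$ into thin fibered strips which equidistribute, giving $\mu(T^tA\cap B)\to\mu(A)\mu(B)$. The only thing to verify is that the localized near-rigidity introduced for recurrence does not spoil this; since it is confined to the sets $G_n$ with $\mu(G_n)\sim 1/n\to 0$, its contribution to any correlation is asymptotically negligible and the shearing on the complement still drives mixing. Finally I would upgrade the ``measure $\eps_n$'' statement to an almost-everywhere one by a Borel--Cantelli argument. Since $\sum_n\mu(A_n)\sim\sum_n 1/n=\infty$, it suffices to show the $A_n$ are quasi-independent, $\mu(A_n\cap A_m)\le C\,\mu(A_n)\mu(A_m)$ for $n\ne m$; this holds because the sets $G_n$ are cut out by Birkhoff sums at the super-separated scales $q_n$, involving essentially disjoint Fourier blocks that are decorrelated by the equidistribution already established. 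The Kochen--Stone form of the second Borel--Cantelli lemma then yields $\mu(\limsup_n A_n)=1$, so $\mu$-a.e.\ $\xi$ lies in infinitely many $A_n$ and is super-recurrent for $T^1_{\a,\a',\varphi}$ with exponent $\al>1$.

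I expect the main difficulty to be the simultaneous realization of the two opposite demands on $g$: the sums $S_{q_n}g$ must be super-flat on a set of measure as large as $\sim 1/n$ --- so that $\sum\eps_n$ diverges and Borel--Cantelli applies --- yet must shear strongly enough off that set, and accumulate shear over the non-resonant times, for mixing to hold. Balancing the amplitudes and frequencies in $g$ so as to keep $\eps_n\sim 1/n$ while the super-flatness exponent stays above $1$ and the mixing shear still diverges is the delicate, construction-heavy step; securing the quasi-independence needed for the second Borel--Cantelli is a further technical point resting on the super-separation of the scales $q_n$.
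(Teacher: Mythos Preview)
Your proposal is correct and follows essentially the same strategy as the paper: a mean-one ceiling $\varphi=1+g$ over a super-Liouville base translation, with $g$ engineered so that the Birkhoff sums $S_{t_n}g$ are super-flat on sets of measure $\sim 1/n$ (yielding super-recurrence via a second Borel--Cantelli/quasi-independence argument) while shearing strongly elsewhere (yielding mixing as in \cite{singular,mixing}). The only discrepancies are in the bookkeeping rather than the ideas: the paper uses the interleaved one-dimensional denominators $q_n\ll q'_n\ll q_{n+1}$ of \cite{mixing} rather than simultaneous approximants, takes the return times to be $t_n=q_nq'_n$ rather than $q_n$, and realizes $g$ in the separated form $\sum_n \tilde X_n(x)+Y_n(y)$ with the flat set cut out by $\{q_nx\}\in[1/n^2,1/n-1/n^2]$, so that mixing comes from the alternating $x$/$y$-shear criterion of \cite{mixing} and flatness from making each $\tilde X_n$ essentially vanish where $\{q_nx\}<1/n$.
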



We will now undertake the construction of the special flow $T^t_{\a,\a',\varphi}$, following the same steps as \cite{singular}. We will first choose a special translation vector on $\T^2$, then we will give two criteria on the Birkhoff sums of the special function $\varphi$ above $R_{\a,\a'}$ that will guarantee mixing and super-recurrence respectively.  Finally we build a smooth function $\varphi$ satisfying these criteria. 

\subsection{Choice of the translation on $\T^2$.} \label{R}

We start with a quick reminder on continued fractions. Let $\a \in \R \setminus \Q$. There exists a sequence of  rationals 
${\lbrace {p_n \over
  q_n} \rbrace}_{n \in \N} $ , called the convergents of $\a$, such that:
\begin{eqnarray} \label{best}  \parallel q_{n-1} \a \parallel < \parallel k \a \parallel \ \ \forall k < q_{n} \end{eqnarray} 
(where $\|x\|:=\min\{|x-n|: n\in\Z\}$) and for any $n$ 

\begin{eqnarray} \label{equation} 
  { 1  \over q_n (q_n + q_{n+1})} \leq {(-1)}^n ( \a - {p_n \over q_n}) \leq   { 1  \over q_n q_{n+1}}.
\end{eqnarray}

We recall also that any
irrational number $\a \in \R - \Q$ has a writing in continued fraction
$$ \a = [a_0,a_1,a_2,...]= a_0 + 1 / (a_1 + 1 / (a_2+...)),$$               
where ${\lbrace a_i \rbrace}_{i \geq 1}$ is a sequence of integers
$\geq 1$,
$a_0 = [\a]$. Conversely  any sequence  ${\lbrace a_i \rbrace}_{i \in \N}$
 corresponds to a unique number $\a$. The convergents of $\a$ are
given by the $a_i$ in the following way:
\begin{eqnarray*}
p_n = a_n p_{n-1} + p_{n-2} \ \ &{\rm for}& \ \ n \geq 2, p_0=a_0,
p_1=a_0a_1 + 1, \\
q_n = a_n q_{n-1} + q_{n-2} \ \ &{\rm for}& \ \ n \geq 2, q_0=1,
q_1=a_1. 
\end{eqnarray*}

Following \cite{Y} and as in \cite{mixing}, we take $\a$ and $\a'$ satisfying 
\begin{eqnarray}
q'_{n} &\geq& e^{(q_n)^5}, \label{111} \\
q_{n+1} &\geq& e^{(q'_n)^5}. \label{222}
\end{eqnarray}

Vectors $(\a,\a') \in \R^2$ satisfying (\ref{111}) and (\ref{222}) are obtained by an adequate choice of the sequences  $a_n(\a)$ and $a_n(\a')$. Moreover, it is easy to see that the set of vectors satisfying (\ref{111}) and (\ref{222}) is a continuum (Cf. \cite{Y}, Appendix 1).

\subsection{Mixing criterion}
 
We will use the criterion on mixing for a special flow $T^t_{\a,\a',\varphi}$ studied in \cite{mixing}. It is based on the uniform stretch of the Birkhoff sums $S_m \varphi$ of the ceiling function above the $x$ or the $y$ direction alternatively depending on whether $m$ is far from the $q_n$ or from $q'_n$. From \cite{mixing}, Propositions 3.3, 3.4 and 3.5 we have the following sufficient
 mixing criterion. We denote by $\{x\} \in [0,1)$ the fractional part of a real number $x$. 
 
\begin{prop}[Mixing  Criterion ] \label{criterion mixing} Let $(\a,\a')$ be as in Section \ref{R} and $\varphi \in C^2(\T^2, {\R^*_+})$.
If for every $n \in \N$ sufficiently large, we have a set $I_n $ equal to $[0,1]$ minus a finite number of intervals whose lengths converge to zero such that:

\begin{itemize}
\item $\displaystyle{ m \in \left[{e^{2(q_n)^4} \over 2} , 2 e^{2(q'_n)^4} \right] \Longrightarrow  \left| D_x S_m \varphi (x,y) \right| \geq {m \over e^{(q_n)^4}} { q_n \over n}}$, for any $y \in \T$ and $\lbrace q_n x \rbrace \in I_n$;
 
\vspace{0.1cm} 

\item $\displaystyle{ m \in \left[{e^{2(q'_n)^4} \over 2} , 2 e^{2(q_{n+1})^4} \right]  \ \ \Longrightarrow  \left| D_y S_m \varphi (x,y) \right| \geq {m \over e^{(q'_n)^4}} { q'_n \over n}}$, for any $x \in \T$ and $\lbrace q'_n y \rbrace \in I_n$.

\end{itemize}

Then the special flow $T^t_{\a,\a',\varphi}$ is mixing.

\end{prop}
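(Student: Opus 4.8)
The plan is to establish mixing through the classical mechanism of stretching of Birkhoff sums, in the scheme that reduces mixing of a special flow to the equidistribution of the images of short horizontal arcs. First I would reduce the statement: by a standard approximation argument it suffices to prove that for a dense family of observables---say indicator functions of boxes $A = B \times J$ with $B \subset \T^2$ and $J \subset \R$ small---the correlations $\mu(T^t A \cap C)$ converge to $\mu(A)\mu(C)$ as $t \to \infty$ for every box $C$. Foliating $A$ by segments parallel to a chosen base direction (the $x$-axis or the $y$-axis), this reduces further to showing that the push-forward under $T^t$ of the normalized length measure on such a segment converges weakly to $\mu$, uniformly over the relevant family of segments.

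The geometric heart of the argument is the stretching estimate. Parametrize a horizontal segment by $z = z_0 + \tau(1,0)$, with $\tau$ small, at fixed fiber level $s$ and fixed time $t$. By \eqref{equation.special} the image is $T^t(z,s) = (R^{N}_{\a,\a'}(z),\, t+s-S_{N}\varphi(z))$ with $N = N(t,s,z)$. On each maximal sub-interval where $N$ is constant, the fiber coordinate $t+s-S_N\varphi(z)$ has $\tau$-derivative $-D_x S_N\varphi(z)$, and the mixing criterion makes $|D_x S_N\varphi|$ at least $\frac{m}{e^{(q_n)^4}}\frac{q_n}{n} \to \infty$ on the good set $\{q_n x\}\in I_n$. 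Thus, as $\tau$ increases, the fiber coordinate descends rapidly through a full fiber, $N$ drops by one, the coordinate jumps back to the top, and the process repeats; over the segment this produces a large number $\asymp |D_x S_N\varphi|\,|B|$ of such sawteeth. Consequently the fiber coordinate equidistributes, and simultaneously $N$ sweeps a long interval of consecutive integers, so that by minimality and unique ergodicity of $R_{\a,\a'}$ the base points $R^{N}_{\a,\a'}(z)$ equidistribute in $\T^2$. The two effects together yield weak convergence of the image of the length measure on the segment to $\mu$.

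To pass from segments to all large times I would note that, for $t$ large, $N(t,s,z)$ is comparable to $t / \int \varphi$, so every large $t$ places $m$ in one of the two ranges appearing in the criterion; because the intervals $[\tfrac{1}{2}e^{2(q_n)^4}, 2e^{2(q'_n)^4}]$ and $[\tfrac{1}{2}e^{2(q'_n)^4}, 2e^{2(q_{n+1})^4}]$ overlap, one always has a stretching bound in at least one base direction, and one chooses the foliation direction accordingly. The hypothesis that $I_n$ is $[0,1]$ minus finitely many intervals of vanishing length ensures that the exceptional set, where $\{q_n x\}\notin I_n$ (resp. $\{q'_n y\}\notin I_n$) and no stretching is available, has $\mu$-measure tending to $0$; its contribution to the correlation is therefore negligible.

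The main obstacle, requiring the most care, is the interaction between the continuous stretching and the discrete jumps of $N(t,s,z)$ at the successive return times: each time the orbit crosses the ceiling, $N$ increases by one and the fiber coordinate drops by $\varphi$, so $z \mapsto t+s-S_N\varphi(z)$ is only piecewise smooth. One must show that on each smooth piece the stretching is strong and essentially monotone---this is where the $C^2$ regularity of $\varphi$ enters, bounding the second derivative $D_x^2 S_m\varphi$ so that the image arc does not fold back on itself too often---and then sum the contributions of the pieces to recover genuine equidistribution rather than mere winding. Quantifying this, together with a uniform (in the segment and in $t$) van der Corput type oscillation estimate and the verification that base and fiber equidistribution hold jointly, is the technical core carried out in Propositions 3.3--3.5 of \cite{mixing}.
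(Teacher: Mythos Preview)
The paper does not give its own proof of this proposition: it is stated as a direct consequence of Propositions~3.3, 3.4 and 3.5 of \cite{mixing}, with only the one-line explanation that mixing rests on ``the uniform stretch of the Birkhoff sums $S_m\varphi$ of the ceiling function above the $x$ or the $y$ direction alternatively.'' Your proposal is thus more detailed than what the paper itself supplies, and your sketch of the stretching-of-Birkhoff-sums mechanism---reduction to equidistribution of pushed-forward short arcs, alternating the foliation direction according to which time range $m$ falls in, negligibility of the complement of $I_n$, and the piecewise-monotone analysis of $t+s-S_N\varphi$ with the $C^2$ bound controlling folding---is precisely the scheme carried out in \cite{mixing} and correctly identifies where each hypothesis is used. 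In short, your approach coincides with the paper's (by reference), and you have accurately located the technical core in Propositions~3.3--3.5 of \cite{mixing}.
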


\subsection{Super-recurrence Criterion}

 We give now a condition on the Birkhoff sums of $\varphi$ above $R_{\a,\a'}$ that is sufficient to insure super-recurrence for $T^1_{\a,\a',\varphi}$.
\begin{prop}[Super-recurrence criterion] \label{criterion approximations}
If for $n$ sufficiently large, we have for any $x$ such that $1 / n^2 \leq \lbrace q_n x \rbrace \leq 1 /n - 1 / n^2$  and for any $y \in \T$ 
\begin{eqnarray} \label{r1} \left| S_{q_nq'_n} \varphi(x,y) - q_nq'_n  \right| \leq {1 \over e^{(q_nq'_n)^2}},
\end{eqnarray}
then $\mu$-almost every $z =(x,y,s) \in M_\varphi$ is super recurrent for the time one map of   the special flow $T^t_{\a,\a',\varphi}$ as in Definition \ref{special-flow}.

\end{prop}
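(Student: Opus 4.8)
The plan is to produce, for $\mu$-a.e.\ $z=(x,y,s)\in M_\varphi$, an explicit sequence of \emph{integer} return times realizing super-recurrence, namely $k_n:=q_nq'_n$. The starting observation is purely about the geometry of the special flow: since the flow has unit vertical speed and the roof identifications are governed by $S_m\varphi$, flowing for the real time $S_{q_nq'_n}\varphi(x,y)$ carries $(x,y,s)$ to $\left(R^{q_nq'_n}_{\a,\a'}(x,y),\,s\right)$, i.e.\ it fixes the fibre level $s$ and translates the base by $q_nq'_n$. Writing $k_n=q_nq'_n$ as an integer flow time, we have $T^{k_n}z=T^{\delta_n}\!\left(R^{q_nq'_n}_{\a,\a'}(x,y),s\right)$ with $\delta_n:=k_n-S_{q_nq'_n}\varphi(x,y)$; by hypothesis \eqref{r1}, $|\delta_n|\le e^{-(q_nq'_n)^2}$ whenever $\{q_nx\}\in[1/n^2,1/n-1/n^2]$. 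For such $n$ large (depending on $z$) this tiny extra time merely shifts the fibre coordinate by $|\delta_n|$ without crossing a roof, for every $s$ with $0<s<\varphi(x,y)$, that is for a.e.\ $z$.

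It then remains to estimate $d(T^{k_n}z,z)$, which splits into the two base displacements and the fibre displacement. Using the convergent bounds \eqref{equation} and the growth conditions \eqref{111}--\eqref{222}, I would bound $\|q_nq'_n\a\|\le q'_n\|q_n\a\|\le q'_n/q_{n+1}\le q'_n e^{-(q'_n)^5}$ and $\|q_nq'_n\a'\|\le q_n\|q'_n\a'\|\le q_n/q'_{n+1}$, the latter being astronomically small, while the fibre term is $|\delta_n|\le e^{-(q_nq'_n)^2}$. Since $q_n\le q'_n$ forces $k_n=q_nq'_n\le (q'_n)^2$, each of the three terms is $\le e^{-k_n^{5/4}}$ for $n$ large, so $d(T^{k_n}z,z)\le e^{-k_n^{5/4}}$. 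Thus, as soon as $\{q_nx\}\in[1/n^2,1/n-1/n^2]$ for infinitely many $n$, the point $z$ is super-recurrent for $T^1$ with exponent $\alpha=5/4>1$.

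The only genuinely probabilistic step — and the one I expect to be the crux — is to show that for a.e.\ $x$ the good event $A_n:=\{x\in\T:\{q_nx\}\in[1/n^2,1/n-1/n^2]\}$ occurs infinitely often. Since $q_n\in\N$, the map $x\mapsto\{q_nx\}$ preserves Lebesgue measure, so $\mu(A_n)=1/n-2/n^2$ and $\sum_n\mu(A_n)=\infty$. To pass from divergence to $\mu(\limsup_nA_n)=1$ I would invoke the Kochen--Stone form of Borel--Cantelli, which only requires control of the pairwise intersections. Here quasi-independence is forced by the super-exponential growth \eqref{111}--\eqref{222}: for $m<n$ the $1/q_n$-periodic set $A_n$ equidistributes across each of the short intervals constituting $A_m$, giving $\mu(A_m\cap A_n)=\mu(A_m)\mu(A_n)+O(q_m/q_n)$ with $\sum_{m<n}q_m/q_n=O(1)$. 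Consequently $\sum_{m,n\le N}\mu(A_m\cap A_n)\le(\sum_{n\le N}\mu(A_n))^2+O(\ln N)$, the ratio in Kochen--Stone tends to $1$, and $\mu(\limsup A_n)=1$ follows.

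Combining the three steps: for $\mu$-a.e.\ $x$ the set of good indices is infinite, the condition in \eqref{r1} holds for every $y$, and for a.e.\ fibre coordinate $s$ the roof is not crossed; hence $\mu$-a.e.\ $z\in M_\varphi$ is super-recurrent for $T^1_{\a,\a',\varphi}$ with exponent $\alpha=5/4$. The main obstacle is thus not the dynamics — \eqref{r1} does the decisive work by pinning $S_{q_nq'_n}\varphi$ to the integer $q_nq'_n$ — but the Borel--Cantelli/quasi-independence argument that guarantees the return condition is met along a subsequence for almost every base point; the one technical nuisance to dispatch carefully is the roof-crossing issue in the fibre, which is harmless off a null set.
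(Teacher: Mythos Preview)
Your proposal is correct and follows essentially the same approach as the paper: take $k_n=q_nq'_n$, use \eqref{equation.special} and \eqref{r1} to control the fibre coordinate, use \eqref{equation}--\eqref{222} to control the base displacement, and invoke a quasi-independent Borel--Cantelli argument for the events $A_n=\{1/n^2\le\{q_nx\}\le 1/n-1/n^2\}$. The paper is terser on two points you spell out --- it simply asserts ``almost independence'' and ``Borel--Cantelli type lemmas'' rather than Kochen--Stone, and it does not explicitly flag the roof-crossing null set --- and it records the sharper estimate $d(z,T^{t_n}z)\le 2e^{-t_n^2}$ (your $e^{-k_n^{5/4}}$ is weaker but of course still sufficient).
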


\begin{proof} Denote by $T^t$ the flow $T^t_{\a,\a',\varphi}$ and let $t_n=q_nq'_n$. From \eqref{equation.special} we have that 
$$T^{t_n} (x,y,s) = (x+t_n \a,y+t_n \a', s+t_n-S_{t_n} \varphi(x,y)).$$
From \eqref{equation}, \eqref{111} and \eqref{222} we get that $\|t_n \a \|, \|t_n \a' \|\leq  {1 \over e^{t_n^3}}$. Now, for $x$ such that  $1 / n^2 \leq \lbrace q_n x \rbrace \leq 1 /n - 1 / n^2$, for any $y \in \T$ and for $s \in [0,\varphi(x,y))$ 
we have from \eqref{r1} that  $z=(x,y,s)$ satisfies (for the Euclidean distance) $d(z,T^{t_n}z)\leq  {2 \over e^{t_n^2}}$.

Now, the set ${\mathcal C}_n =\{ x \in \T : 1 / n^2 \leq \lbrace q_n x \rbrace \leq 1 /n - 1 / n^2 \}$ has Lebesgue measure larger than $1/2n$. As $q_n$ increases very fast, we have that the sets $\mathcal C_n$ are almost independent, from which it follows by Borel-Cantelli type lemmas that Lebsgue a.e. $x \in \T$ belongs to infinitely many of the  $\mathcal C_n$. Thus almost every $z \in M_\varphi$ is super-recurrent. 
\end{proof} 



\subsection{Choice of the ceiling function $\varphi$.} \label{phi}

Let $(\a,\a')$ be as above and define 
$$ f(x,y) = 1+ \sum_{n \geq 2} X_n(x) + Y_n(y)$$ 
where 
\begin{eqnarray}
\label{XX} X_n(x) &=& {1 \over e^{(q_n)^4}} \cos (2\pi q_nx) \\
\label{YY} Y_n(y) &=& {1 \over e^{(q'_n)^4}} \cos (2 \pi q'_n y).
\end{eqnarray}

Using Criterion \ref{criterion mixing}, we proved in \cite{mixing} that the flow $T^t_{\a,\a',f}$ is mixing (the proof will be recalled below). In order to keep this criterion valid but have in addition the conditions of Criterion \ref{criterion approximations} satisfied we modify the ceiling function in the following way:

$\bullet $ We keep $Y_n(y)$ unchanged.

\vspace{0.1cm} 

$\bullet$ We replace $X_n(x)$ by a trigonometric polynomial $\tilde{X}_n$ with integral zero, that is essentially equal to $0$ for $\lbrace q_nx \rbrace < 1 /n$ and  whose  derivative has its absolute value  bounded from below by ${q_n / e^{(q_n)^4}}$ for 
 $ \lbrace q_n x \rbrace \in [0,1]\setminus \bigcup_{j=0}^4 [j/4-2/n,j/4+2/n]$.
The first two properties of $\tilde{X}_n$ will yield Criterion \ref{criterion approximations} while the  lower bound on the absolute value of its derivative will insure Criterion \ref{criterion mixing}.

More precisely, the following Proposition enumerates some properties that we will require on $\tilde{X}_n$ and its Birkhoff sums, and that will be sufficient for our purposes.

\begin{prop} \label{xtilde} Let $(\a,\a')$ be as in Section \ref{R}. There exists a sequence of trigonometric polynomials $\tilde{X}_n(x)$ satisfying

\begin{enumerate}

\item[(1)] \label{x1}  $\displaystyle{ \int_{\T} \tilde{X}_n(x) dx=0}$;

\item[(2)]  \label{x2} For any $r \in \N$, for every $n \geq N(r)$, $\displaystyle{ {\parallel \tilde{X}_n \parallel}_{C^r} \leq {1 \over e^{(q_n)^4\over 2}}}$;

\item[(3)]  \label{x3} For $\displaystyle{ \lbrace q_n x \rbrace \leq {1 \over n} }$, 
 $\displaystyle{ |\tilde{X}_n(x)| \leq {1 \over e^{(q_n q'_n)^4} } }$;
  
\item[(4)]  \label{x4} For $\displaystyle{\lbrace q_n x \rbrace \in [{2 \over n},  {1 \over 4} - {2 \over n}] \cup [{3 \over 4} +{2 \over n},  {1} - {2 \over n}]     }$, it holds  ${\displaystyle{\tilde{X}_n'(x)  \geq {q_n \over e^{(q_n)^4}}}}$, 
as well as
\newline  for  $\displaystyle{    \lbrace q_n x \rbrace \in [{1 \over 4} +{2 \over n}, {1 \over 2} - {2 \over n}] \cup [{1 \over 2} +{2 \over n}, {3 \over 4} - {2 \over n}]       }$,  it holds {$\displaystyle{\tilde{X}_n'(x)  \leq  -{q_n \over e^{(q_n)^4}}}$;}

\item[(5)]  \label{x5} $\displaystyle{\parallel S_{q_n} \sum_{l\leq n-1} \tilde{X}_{l} \parallel_{C^0} \leq {1 \over e^{(q_n q'_n)^4} } }$; 
 
\item[(6)]   \label{x6} For any $m \in \N$, $\displaystyle{\parallel S_{m} \sum_{l\leq n-1} \tilde{X}'_{l} \parallel_{C^0} \leq q_n}$.


\end{enumerate}

\end{prop}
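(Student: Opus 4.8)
The plan is to look for the polynomials in the separated form $\tilde{X}_n(x)=e^{-(q_n)^4}\,g_n(q_nx)$, where $g_n$ is a $1$-periodic real profile of a single variable. With this ansatz $\tilde{X}_n$ is automatically a trigonometric polynomial whose nonzero frequencies are the multiples $jq_n$, $1\le|j|\le d_n$ with $d_n=\deg g_n$, and the six requirements become statements about $g_n$. Writing $\theta=\{q_nx\}$: property (1) is $\hat g_n(0)=0$; property (3) is $|g_n(\theta)|\le e^{(q_n)^4-(q_nq_n')^4}$ on $[0,1/n]$; property (4) is $g_n'(\theta)\ge1$ on $[2/n,1/4-2/n]\cup[3/4+2/n,1-2/n]$ and $g_n'(\theta)\le-1$ on the two middle arcs; property (2) is $\|g_n\|_{C^r}\le e^{(q_n)^4/2}/q_n^r$ for $n\ge N(r)$. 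Properties (5) and (6) will be Birkhoff-sum estimates depending only on the Fourier support of the $g_l$ and on the arithmetic of $(\a,\a')$.

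For the construction I would fix the derivative first. Let $\rho_n=g_n'$ be a smooth profile, even about $1/2$, \emph{identically zero} on $[0,1/n]\cup[1-1/n,1]$, with $\rho_n\ge1$ (resp. $\le-1$) on the arcs prescribed by (4), and with $\int_0^1\rho_n=0$. Then $g_n(\theta)=\int_0^\theta\rho_n$ is odd about $1/2$, so $\int_0^1 g_n=0$ gives (1) for free; it vanishes identically on $[0,1/n]$, giving (3) with room to spare; it has the derivative signs of (4); and it has amplitude $O(1)$ with $\|g_n\|_{C^r}\lesssim_r n^r$ (the features live at scale $1/n$). To make $\tilde{X}_n$ a trigonometric polynomial I would truncate the Fourier series of $g_n$ at degree $d_n$. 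Taking $\rho_n$ in a fixed Gevrey class forces $|\hat g_n(j)|\lesssim e^{-c(|j|/n)^{1/\sigma}}$, so choosing $d_n$ polynomial in $q_nq_n'$ makes the truncation error below $e^{-(q_nq_n')^4}$ in every fixed $C^r$ norm, whence (1), (3), (4) survive after a harmless normalization. Crucially $d_n\ll q_{n+1}$, since $\log d_n=O(\log q_n')\ll(q_n')^5\le\log q_{n+1}$ by \eqref{111}--\eqref{222}.

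The delicate point is (2), and it is where the apparent tension of the statement sits: forcing $|g_n|\le e^{-(q_nq_n')^4}$ on an interval of length $1/n$ compels, by a Tur\'an-type inequality, the truncation degree to satisfy $d_n\gtrsim(q_nq_n')^4/\log n$, far larger than the frequency budget one would naively read off from (2). The resolution I would exploit is that the localization is realized by the \emph{exact} vanishing of a non-analytic smooth profile — impossible for an analytic function, but harmless here — so that the large degree does not inflate the \emph{fixed}-order norms: for fixed $r$ the quantity $\|g_n\|_{C^r}=\sum_j(2\pi j)^r|\hat g_n(j)|$ is dominated by the low frequencies $|j|\lesssim n$ coming from the smooth shape, while the sub-exponentially decaying tail carrying the truncation contributes at most $d_n^{\,r}e^{-(q_nq_n')^4}\ll1$. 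Hence $\|\tilde{X}_n\|_{C^r}=e^{-(q_n)^4}q_n^r\|g_n\|_{C^r}\lesssim e^{-(q_n)^4}(q_nn)^r\le e^{-(q_n)^4/2}$ once $n\ge N(r)$, so a large degree coexists with small fixed-order smoothness.

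Finally (5) and (6) follow from the geometric cancellation of Birkhoff sums of pure frequencies. For a frequency $jq_l$ with $l\le n-1$ and $j\le d_l\ll q_{l+1}$, the exponential sum $\sum_{h=0}^{q_n-1}e^{2\pi i h jq_l\a}$ has modulus at most $\pi\|jq_lq_n\a\|/(2\|jq_l\a\|)$; using $\|jq_lq_n\a\|\le jq_l\|q_n\a\|\le jq_l/q_{n+1}$ and $\|jq_l\a\|=j\|q_l\a\|\ge j/(2q_{l+1})$ the factor $j$ cancels, leaving the bound $\pi q_lq_{l+1}/q_{n+1}$ \emph{uniform in} $j$. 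Summing against $e^{-(q_l)^4}|\hat g_l(j)|$ (whose $\ell^1$-mass is $O(l)$) and then over $l\le n-1$, the dominant term $l=n-1$ contributes $\lesssim e^{-(q_{n-1})^4}q_{n-1}q_n/q_{n+1}$, which lies below $e^{-(q_nq_n')^4}$ because $q_{n+1}\ge e^{(q_n')^5}$ and $(q_n')^5-(q_nq_n')^4\ge(q_n')^5/2$; this is (5). For (6) one keeps the cruder bound $\min(m,q_{l+1}/j)\le q_{l+1}/j$ on the $m$-step sum, giving $|S_m\tilde{X}_l'|\le e^{-(q_l)^4}\,2\pi q_lq_{l+1}\sum_j|\hat g_l(j)|$; the top term $l=n-1$ is $q_n\cdot o(1)$ and the lower ones sum to $\ll q_n$, yielding the bound $q_n$. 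The main obstacle throughout is precisely the joint demand of (2), (3) and (5)--(6); once it is dissolved by decoupling ``large degree'' from ``large fixed-order norm'' while keeping $d_n\ll q_{n+1}$, the shape conditions (1), (3), (4) and the arithmetic estimates (5), (6) are routine.
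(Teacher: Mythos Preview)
Your proposal is correct and follows essentially the same construction as the paper: build a smooth $q_n$-periodic profile with the prescribed derivative signs that vanishes identically on the short arc $\{q_nx\}\le 1/n$, then truncate its Fourier series to obtain the trigonometric polynomial. The paper's execution is a bit simpler --- it uses only a $C^\infty$ bump (no Gevrey hypothesis), truncates at the much larger cutoff $q_{n+1}-1$ so the degree/Tur\'an tension you carefully analyze never surfaces, and handles (5)--(6) via the cruder lower bound $\|k\alpha\|\ge 1/(2q_n)$ for $0<|k|<q_n$ rather than your $j$-cancellation identity $\|jq_l\alpha\|=j\|q_l\alpha\|$ --- but your variants work equally well.
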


Before we prove this Proposition let us show how it allows to produce the example of Theorem \ref{special}. Define for some $n_0 \in \N$

\begin{eqnarray} \label{varphi} 
\varphi(x,y) = 1+ \sum_{n=n_0}^{\infty} \tilde{X}_n(x) + Y_n(y)
\end{eqnarray}
that is of class $C^\infty$ from Property 
(2) of $\tilde{X}_n$ and 
from the definition of $Y_n$ in (\ref{YY}). From (2) again, we can choose $n_0$ sufficiently large so that $\varphi$ is strictly positive.  Furthermore, we have

\begin{theo} Let $(\a,\a') \in \R^2$ be as in Section \ref{R} and $\varphi$ be given by (\ref{varphi}). Then the special flow $T^t_{\a,\a',\varphi}$ satisfies the conditions of Propositions \ref{criterion mixing} and \ref{criterion approximations} and hence the conclusion of Theorem \ref{special}.
 \end{theo}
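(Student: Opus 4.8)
The plan is to verify, for the explicit $\varphi$ defined in \eqref{varphi}, that each of the two criteria holds by feeding in the properties (1)--(6) of $\tilde X_n$ together with the known analysis of the unmodified function $f$ from \cite{mixing}. The decisive observation is that $\varphi$ differs from $f$ only by replacing $X_n$ with $\tilde X_n$, and property (2) says this replacement is $C^r$-small for all $r$; moreover the $Y_n$ part, which controls the $y$-direction stretch, is untouched. So I would organize the proof in two blocks: first super-recurrence (Proposition \ref{criterion approximations}), then mixing (Proposition \ref{criterion mixing}), since the super-recurrence block is the one that motivated the modification and is where properties (3) and (5) do their work.

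For the super-recurrence block I would estimate $S_{q_nq'_n}\varphi(x,y)-q_nq'_n$ on the set $1/n^2\le\{q_nx\}\le 1/n-1/n^2$. Writing $S_{q_nq'_n}\varphi = q_nq'_n + \sum_{l\ge n_0} S_{q_nq'_n}\tilde X_l + \sum_{l\ge n_0} S_{q_nq'_n}Y_l$, I split the index $l$ into three ranges: $l\le n-1$, $l=n$, and $l\ge n+1$. The tail $l\ge n+1$ is handled by the super-exponential gaps \eqref{111}--\eqref{222} between $q_n,q'_n,q_{n+1}$ together with the $C^1$ smallness from (2), which make those Birkhoff sums negligibly small. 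The term $l=n$ uses property (3): on $\{q_nx\}\le 1/n$ each of the $q_n'$ copies $\tilde X_n(x+j(\a,\a'))$ is bounded by $e^{-(q_nq'_n)^4}$, since the first coordinate stays within the good range (here one needs $\|q'_n\a\|$ and the drift to be controlled, again by \eqref{111}--\eqref{222}); summing $q_nq'_n$ copies still beats $e^{-(q_nq'_n)^2}$. The $Y$-sums are treated similarly using that $q_nq'_n$ is a near-multiple of $q'_n$ so that $S_{q_nq'_n}Y_n$ telescopes to something tiny. Finally property (5) disposes of $\sum_{l\le n-1}\tilde X_l$ directly. Collecting, the total is $\le e^{-(q_nq'_n)^2}$, which is exactly \eqref{r1}, and Proposition \ref{criterion approximations} then gives super-recurrence.

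For the mixing block I would invoke that $T^t_{\a,\a',f}$ already satisfies Criterion \ref{criterion mixing}, as recalled from \cite{mixing}, and argue that the stretch lower bounds survive the perturbation $f\mapsto\varphi$. In the $y$-regime $m\in[e^{2(q'_n)^4}/2,2e^{2(q_{n+1})^4}]$ the $Y_n$ part is unchanged, so the bound $|D_yS_m\varphi|\ge (m/e^{(q'_n)^4})(q'_n/n)$ is identical to the one for $f$ once one checks that $\sum_l D_y S_m\tilde X_l=0$ (the $\tilde X_l$ depend only on $x$) and the remaining $Y_l$, $l\ne n$, contribute lower-order terms killed by \eqref{111}--\eqref{222}. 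In the $x$-regime $m\in[e^{2(q_n)^4}/2,2e^{2(q'_n)^4}]$ I would write $D_xS_m\varphi = S_m\tilde X_n' + \sum_{l\le n-1}S_m\tilde X_l' + \sum_{l\ge n+1}S_m\tilde X_l'$; property (4) gives the leading term the required size $\ge (m/e^{(q_n)^4})(q_n/n)$ on the set where $\{q_nx\}$ avoids the bad neighborhoods of the quarter-points (this defines $I_n$ as $[0,1]$ minus finitely many shrinking intervals), property (6) controls the low modes uniformly by $q_n$, and the high modes are negligible by smallness and the frequency gaps. The main obstacle is this last $x$-direction estimate: unlike the clean cosine $X_n$, the polynomial $\tilde X_n$ is engineered to be flat on $\{q_nx\}<1/n$ (for super-recurrence) yet steep elsewhere (for mixing), so I must check that the set where $|\tilde X_n'|$ is large, namely the complement of the quarter-point neighborhoods, still has the form required by Criterion \ref{criterion mixing}, and that $S_m\tilde X_n'$ inherits the single-frequency stretch uniformly in $y$ despite $\tilde X_n$ now being a genuine polynomial rather than one harmonic. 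Verifying that this competition is won with the constants as stated, and that the error terms from $l\ne n$ never erode the leading lower bound, is where the real work lies; everything else is bookkeeping with the Diophantine inequalities \eqref{best}--\eqref{222}.
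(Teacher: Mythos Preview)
Your proposal is correct and follows essentially the same route as the paper, only with the two blocks in reversed order. The one point you flag as uncertain---that $S_m\tilde X_n'$ inherits the full stretch despite $\tilde X_n$ being a polynomial rather than a single harmonic---is resolved exactly as you anticipate: for $m\le 2e^{2(q'_n)^4}$ and $\{q_nx\}$ starting in one of the good quarter-intervals (with a $3/n$ buffer), \eqref{222} forces $\{q_n(x+l\alpha)\}$ to remain in that same quarter (within the $2/n$ buffer) for every $0\le l\le m$, so by property~(4) all summands $\tilde X_n'(x+l\alpha)$ share a common sign and magnitude $\ge q_n/e^{(q_n)^4}$, giving $|S_m\tilde X_n'(x)|\ge mq_n/e^{(q_n)^4}$ with no cancellation; properties~(2) and~(6) then dispose of the $l\ne n$ terms exactly as you outline.
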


\noindent {\sl Proof.}  The second part of Proposition \ref{criterion mixing} is valid exactly as in \cite{mixing} since $Y_n$ has not been modified. We sketch its proof for completeness. For $m\in \N$, we have that 
$$S_m Y_n(y)= {\rm Re} \left({Y(m,n) \over e^{(q'_n)^4}} e^{i2 \pi {q'_n}y} \right), $$
with 
$$ Y(m,n)= {{1-e^{i2 \pi m{q'_n} \a'}} \over {1-e^{i2 \pi q'_n\a'}}}.$$

It follows from \eqref{best}--\eqref{222} that 

\begin{align} \label{nnn1}
|Y(m,k)|&\leq m, \ \forall k \in \N^*,  \\
|Y(m,k)|&\leq q'_{n}, \ \forall k < n,  \label{nnn2} \\
  |Y(m,n)|&\geq { 2 \over \pi} m,  \ \forall m\leq {q'_{n+1} \over 2}.  \label{nnn3}
\end{align}
Let now $m \in [e^{2(q'_{n})^4} / 2, 2e^{2(q_{n+1})^4}]$, and $y$ be such that $3 / n \leq \lbrace q'_n y \rbrace \leq 1 / 2  - 2 / n.$ Then  \eqref{equation} and \eqref{111} imply that for any $0\le l\leq m$ : $2 / n \leq  \lbrace q'_n (y+l\a') \rbrace \leq 1 / 2  - 1 / n$. Hence \eqref{nnn3} implies that 
$$|S_m {Y}_n'(y)| \geq {8m q'_n \over ne^{(q'_n)^4}}.$$

Using \eqref{nnn1} and \eqref{nnn2} to bound $\| \sum_{k>n} S_m {Y}_k'\|$ and $\|\sum_{k<n} S_m {Y}_k'\|$ respectively, we obtain that $|S_m \sum_{k\geq 1} {Y}_k'(y)| \geq {m q'_n \over {n}e^{(q'_n)^4}}$. The case 
$1/2+3 / n \leq \lbrace q'_n y \rbrace \leq 1   - 2 / n$ is treated similarly.

We now turn to the control of the Birkhoff sums in the $x$ direction. 
Let $m \in [e^{2(q_n)^4} / 2, 2e^{2(q'_n)^4}]$ and $x$ be such that $| \lbrace q_n x \rbrace - j/4| > 3/n$ for any positive integer $j \leq 4$. For definiteness assume that $\lbrace q_n x \rbrace \in [{3 \over n},  {1 \over 4} - {3 \over n}],$ the other cases being similar.  

From \eqref{222} we get for any $0\le l \leq m$ that $2 / n \leq  \lbrace q_n (x+l\a) \rbrace \leq 1 / 4  - 2 / n,$ hence by Property (4) of $\tilde{X}_n$ 
$$|S_m \tilde{X}_n'(x)| \geq {m q_n \over  e^{(q_n)^4}}.$$
 On the other hand, Properties (2) and (6) imply that 
\begin{eqnarray*} \parallel S_m \varphi'- S_m \tilde{X}_n'  \parallel &\leq& q_n + m \sum_{l \geq n+1} {1 \over e^{{(q_l)^4 \over 2}}} \\
&\leq& q_n + {2m \over e^{{(q_{n+1})^4 \over 2}}} \\
&=& o({mq_n \over e^{(q_n)^4}}) \end{eqnarray*}
for the current range of $m$. The criterion of Proposition \ref{criterion mixing} thus holds  true.

Let now $x $ be as in Proposition \ref{criterion approximations}, that is $1 / n^2 \leq \lbrace q_n x \rbrace \leq 1 /n - 1 / n^2$. From \eqref{222} we have for any $l \leq q_n q'_n$ that $0 \leq \lbrace q_n(x+l\a) \rbrace \leq 1 / n$, hence  Property (3) implies 
\begin{eqnarray} |S_{q_nq'_n} \tilde{X}_n (x) | \leq  {q_n q'_n \over e^{(q_n q'_n)^4}} \leq {1 \over e^{(q_n q'_n)^3}}. \label{101} \end{eqnarray}
 From Properties (2) and (5) we get for $n$ sufficently large
\begin{eqnarray} \parallel S_{q_nq'_n} \sum_{l \neq n}   \tilde{X}_l   \parallel &\leq&  { q'_n \over e^{(q_n q'_n)^4} }+ q_nq'_n \sum_{l \geq n+1} {1 \over e^{{(q_l)^4 \over 2}}} \nonumber  \\
&\leq&   {  {1 \over e^{(q_n q'_n)^3} }}.  \label{102}
\end{eqnarray}

On the other hand, it follows from the definition of convergents in Section \ref{R} and (\ref{equation}) that for any $y \in \T$, for any $|j| < q'_{n}$, we have 
\begin{eqnarray}
|S_{q'_n} e^{i2 \pi jy}| &=& \left| \sin(\pi j q_n' \a') \over \sin(\pi j \a') \right|  \nonumber \\
&\leq&  {2\pi j q_n' \over q'_{n+1}}, \label{103} 
\end{eqnarray}
which, using \eqref{111} and \eqref{222},  yields for $Y_l$ as in (\ref{YY})
\begin{eqnarray} \| S_{q'_n} \sum_{l<n} Y_l \| \leq  {1 \over e^{(q_n q'_n)^3} }, \label{104} \end{eqnarray} 
while clearly 
\begin{eqnarray} \| S_{q'_n} \sum_{l > n} Y_l \| \leq {  e^{-{(q'_{n+1})^4 \over 2} } }  \leq  {1 \over e^{(q_n q'_n)^3} } \label{105} \end{eqnarray}
and 
\begin{eqnarray} \| S_{q'_n} Y_n \| \leq {  q'_n \over e^{(q'_n)^4} }   \leq  {1 \over e^{(q_n q'_n)^3} }. \label{1005} \end{eqnarray}
Putting together \eqref{104}--\eqref{1005} yields 
\begin{eqnarray} \| S_{q_n q'_n} \sum_{l=n_0}^\infty  Y_l \| \leq     {1 \over 2 e^{(q_n q'_n)^2} } \label{10005} \end{eqnarray}

In conclusion, (\ref{r1}) follows from (\ref{101}), (\ref{102}), and (\ref{10005}). \carre

It remains to construct $\tilde{X}_n$ satisfying (1)-(6).

\subsection{Proof of Proposition \ref{xtilde}}

Consider on $\R$ a $C^\infty$ function, $0 \leq \theta \leq 1$ such that
\begin{eqnarray*}
\theta(x) &=& 0 \ {\rm for } \ x \in (-\infty,0] \\
\theta(x) &=&  1  \ {\rm for } \ x \in [1, + \infty ).
\end{eqnarray*}

Then we define 
$$\theta_n(x) := \theta \left(nq_n(x-{1 \over nq_n} )\right) - \theta \left( nq_n(x-{1 \over 4 q_n}+{2\over nq_n})\right).$$
Observe that 
$$ \theta_n(x) = \left\{\begin{array}{l}  1,  {\rm \ for \ }  x \in [{2 \over n q_n}, {1 \over 4q_n} - {2 \over n q_n}] \\
0,  {\rm \ for \ } x \in [-\infty, {1 \over nq_n} ] \cup [{1 \over 4q_n}-{1 \over n q_n},+\infty] 
     \end{array} \right. $$

We define on $\R$ the following functions 
$$U_n(x)= \int_{-\infty}^x \theta_n(u)du$$
then
$$V_n(x)=U_n(x)-U_n(x-{1 \over 4q_n}).$$

Observe that $V_n$ is compactly supported inside $[0,{1 \over 2q_n}]$ and has derivative equal to $1$ for $x \in J_n =[{2 \over n q_n}, {1 \over 4q_n} - {2 \over n q_n}]$ and derivative equal to $-1$ on ${1 \over 4q_n}+J_n$. 
Define now the zero averaged function supported inside $[0,{1\over q_n}]$ 
$$W_n(x)=V_n(x)-V_n(x-{1 \over 2q_n}).$$
The derivative of $W_n$ is constant equal to $1$ on $J_n \cup ({3 \over 4q_n}+J_n)$ and constant equal to $-1$ on  $({1 \over 4q_n} +J_n) \cup ({1 \over 2q_n} +J_n)$. Also $W_n \equiv 0$ on  $[0,{1 \over nq_n}]\cup [{1 \over q_n}-{1 \over nq_n},{1 \over q_n}]$.

We define the following function on the circle  $\T=\R / \Z$
$$\hat{X}_n(x) := \frac{q_n}{e^{(q_n)^4}} \sum_{k=0}^{q_n-1}  W_n(x+\frac{k}{q_n}).$$
 It is easy to check (1),(2),(3) and (4) of Proposition \ref{xtilde} for $\hat{X}_n$.

Now we consider the Fourier series of $\hat{X}_n(x) = \sum_{k \in \Z}  \hat{X}_{n,k} e^{i2\pi k x}$
and let

$$\tilde{X}_n (x):= \sum^{q_{n+1}-1}_{k=-q_{n+1}+1} \hat{X}_{n,k} e^{i 2 \pi k x}.$$

From the order of the truncation and the $C^r$ norms of $\hat{X}_n$ it is easy to deduce that for any $r \in \N$
$${\| \tilde{X}_n - \hat{X}_n \|}_{C^r} \leq   {1 \over e^{(q_n q'_n)^5} }, $$
which allows to check (1), (2), (3) and (4) for $\tilde{X}_n$.

\vspace{0.2cm} 

\noindent  {\sl Proof of Property (5).} As for (\ref{103}), using the definition of convergents in Section \ref{R}, we obtain  for any $x \in \T$, and for any $|k| < q_{n}$  
\begin{eqnarray*}
|S_{q_n} e^{i2 \pi kx}| &\leq&  {2\pi k q_n \over q_{n+1}},
\end{eqnarray*}
hence for $\tilde{X}_l := \sum^{q_{l+1}-1}_{k=-q_{l+1}+1} \hat{X}_{l,k} e^{i 2 \pi k x}$ and $l \leq n-1$ we have
\begin{eqnarray*}
\| S_{q_n} \tilde{X}_l \| &\leq&   {2\pi q_n^2 \over q_{n+1}} \sum^{q_{l+1}-1}_{k=-q_{l+1}+1} |\hat{X}_{l,k}| \\ 
&\leq&
{4 \pi  q_n^3 \over q_{n+1}} \| \hat{X}_l \|,
\end{eqnarray*}
thus, Property (5) follows.

\vspace{0.2cm} 

\noindent {\sl Proof of Property (6).} For any $|k| < q_n$ we have  
\begin{eqnarray*}
|S_{m} e^{i2 \pi kx}| &=& \left| \sin(\pi m k  \a) \over \sin(\pi k \a) \right| \\
&\leq&  {1 \over |\sin(\pi k \a)} \\
&\leq&  q_n.
\end{eqnarray*}
Thus, for $l \leq n-1$, we use that $  |\hat{X}_{l,k} | \leq {1 \over {(2 \pi |k|)}^3} {\| D^3_x \hat{X}_l \|}$ and get that 
\begin{eqnarray*}
\| S_{m} \tilde{X}_l' \| &\leq&   
 \sum^{q_{l+1}-1}_{k=-q_{l+1}+1} {1 \over {(2 \pi |k|)}^2}  q_{l+1} {\| D^3_x \hat{X}_l \|} \\ &\leq&
{1 \over 12} q_{l+1}  \| D^3_x \hat{X}_l \| 
\end{eqnarray*}
from which Property (6) follows.   \hfill $\Box$ 

$$ $$

\end{document}